\documentclass[letterpaper,11pt,reqno]{amsart}

\makeatletter
\usepackage{amssymb}
\usepackage{latexsym}
\usepackage{amsbsy}
\usepackage{amsfonts}
\usepackage{graphicx}
\usepackage{enumerate}
\usepackage{enumitem}
\usepackage{mathtools}
\usepackage{subcaption}
\usepackage{color}
\usepackage{tcolorbox}
\usepackage{url}
\usepackage{comment}
\usepackage{appendix}

\usepackage{tikz}

\def\marginpar#1{\ignorespaces}

  \newcommand{\beq}{\begin{equation}}
    \newcommand{\eeq}{\end{equation}}
    
    \newcommand{\bal}{\begin{align}}
    \newcommand{\eal}{\end{align}}
    \newcommand{\bals}{\begin{align*}}
    \newcommand{\eals}{\end{align*}}
    
\newtheorem{theorem}{Theorem}[section]

\newtheorem{lemma}[theorem]{Lemma}
\newtheorem{proposition}[theorem]{Proposition}
\newtheorem{corollary}[theorem]{Corollary}

\numberwithin{equation}{section}
\makeatother
\begin{document}
\title[Pitman and  the Gorin-Shkolnikov identity]{From Pitman's local times representation to the Gorin-Shkolnikov identity and beyond}

\author[W. Tang]{Wenpin Tang}

\address[W. Tang]{Department of Industrial Engineering and Operations Research, Columbia University, S.W. Mudd Building, 
500 W 120th St, New York, NY 10027} 
\email{wt2319@columbia.edu}

\date{\today}

\begin{abstract}
We give a novel proof of the Gorin-Shkolnikov identity based on Pitman's SDE representation of Brownian excursion local times.
More generally, we derive a family of Gaussian identities for nonlinear functionals of excursion local times, which includes Hariya's identity.
These results extend naturally to reflected Brownian bridges conditioned on their local times  and to Brownian meanders.
\end{abstract}

\maketitle

\textit{Key words}: Brownian excursion/meander, local times, occupation times.


$$~$$
\quad The purpose of this note is to provide a novel proof of the following theorem,
which first appeared as a by-product in the study of the stochastic Airy operator \cite{GS18}. 

\begin{theorem}[\cite{GS18}, Corollary 2.3]
\label{thm:main}
Let $\{e(t)\}_{0 \le t \le 1}$ be a Brownian excursion on $[0,1]$
and $\{l_x\}_{x \ge 0}$ be local times process\footnote{Here $\ell_y$ is the local times of $\{e(t)\}_{0 \le t \le 1}$ at level $y \ge 0$. See e.g., \cite[Chapter VI]{RY} for background.}.
Then:
\begin{equation}
\label{eq:GSid}
\int_0^1 e(t)dt - \frac{1}{2} \int_0^\infty \ell_x^2 dx \stackrel{d}{=} \mathcal{N}\left(0,\frac{1}{12}\right),
\end{equation}
where $\mathcal{N}(m,\sigma^2)$ denotes a Gaussian random variable with mean $m$ and variance $\sigma^2$.
\end{theorem}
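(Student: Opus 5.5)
The plan is to pass from the level variable $x$ to the \emph{occupation-time} variable, where Pitman's SDE description of the excursion local times becomes a Bessel-type bridge SDE. In that variable the functional in \eqref{eq:GSid} should collapse, after a single application of It\^o's formula, into a Wiener integral of a deterministic function.

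\textbf{Time change and the SDE.} Set $T_x:=\int_0^x \ell_y\,dy$, the time $e$ spends below level $x$. It increases from $0$ to $1$, so let $x(t)$ be its inverse on $[0,1]$ and set $Z(t):=\ell_{x(t)}$. Then $x'(t)=1/Z(t)$. By Pitman's representation of the excursion local times (equivalently, Jeulin's theorem), $\tfrac12 Z$ is a Brownian excursion, i.e.\ a three-dimensional Bessel bridge from $0$ to $0$ on $[0,1]$. Writing this as an SDE, there is a Brownian motion $W$ with
\begin{equation*}
dZ(t) = 2\,dW(t) + \Bigl(\frac{4}{Z(t)} - \frac{Z(t)}{1-t}\Bigr)dt, \qquad 0<t<1 .
\end{equation*}
The first thing I will pin down carefully is the constant: that the factor is $2$ under the occupation-density normalization of $\ell$ used in Theorem~\ref{thm:main}.

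\textbf{Rewriting the functional.} By the occupation formula and the substitution $dx = dt/Z(t)$,
\begin{equation*}
\int_0^1 e(t)\,dt=\int_0^\infty x\,\ell_x\,dx=\int_0^1 x(t)\,dt,
\qquad
\int_0^\infty \ell_x^2\,dx=\int_0^1 Z(t)\,dt .
\end{equation*}
Now apply It\^o's formula to $(1-t)Z(t)$. The $1/(1-t)$ singularity in the drift is cancelled, giving
\begin{equation*}
d\bigl((1-t)Z(t)\bigr)=2(1-t)\,dW(t)+\Bigl(\frac{4(1-t)}{Z(t)}-2Z(t)\Bigr)dt .
\end{equation*}
Integrate over $[0,1]$. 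The left side vanishes because $Z(0)=0$ and $(1-t)Z(t)\to0$ as $t\to1$. Next, $\int_0^1 (1-t)/Z(t)\,dt=\int_0^1(1-t)x'(t)\,dt=\int_0^1 x(t)\,dt$ by integration by parts, using $x(0)=0$ and $x(1)=\max e<\infty$. This yields
\begin{equation*}
\int_0^1 x(t)\,dt-\frac12\int_0^1 Z(t)\,dt=-\frac12\int_0^1(1-t)\,dW(t),
\end{equation*}
which is centered Gaussian with variance $\tfrac14\int_0^1(1-t)^2dt=\tfrac1{12}$. This is exactly \eqref{eq:GSid}.

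\textbf{Main obstacle.} The hard part is rigor at the endpoints, where $Z\to0$ and the drift blows up. I will first integrate over $[\varepsilon,1-\varepsilon]$ and then let $\varepsilon\to0$. This requires four facts:
\begin{itemize}[leftmargin=*]
\item $\int_0^1 dt/Z(t)<\infty$. This holds because it equals $x(1)=\max e$.
\item The boundary terms vanish: $(1-\varepsilon)Z(\varepsilon)\to0$ and $\varepsilon Z(1-\varepsilon)\to0$.
\item $(1-t)x(t)\to0$ at both ends, which holds since $x$ is bounded.
\item The stochastic integrals converge, which is immediate since the integrand $1-t$ is deterministic and bounded.
\end{itemize}
Thus the real work is the correct identification and normalization of the SDE from Pitman's representation. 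The limit argument itself should be routine.
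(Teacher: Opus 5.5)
Your argument is correct and is the paper's proof written in the occupation-time variable $t=T_x=1-I_x$. Under this change of variables your process $(1-t)Z(t)$ is exactly the paper's $\ell_x I_x$, and your SDE for $Z$ is Pitman's SDE \eqref{eq:Pitman} time-changed, with the constant $2$ correct. Likewise your Wiener integral $-\frac12\int_0^1(1-t)\,dW_t$ is the paper's $-\frac12\int_0^\infty I_x\sqrt{\ell_x}\,dB_x$ via $dW_{T_x}=\sqrt{\ell_x}\,dB_x$.

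What your parametrization buys is a deterministic integrand, so Gaussianity is immediate. In the level variable the integrand $I_x\sqrt{\ell_x}$ is random, and the paper implicitly needs a Dambis--Dubins--Schwarz argument using that its total quadratic variation $\frac14\int_0^\infty I_x^2\ell_x\,dx=\frac1{12}$ is deterministic.
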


\quad The fact that the left side of \eqref{eq:GSid} has zero mean may not be too surprising because it is known \cite{BY87, Jeu} that 
\begin{equation*}
\int_0^1 e(t)dt \stackrel{d}{=} \frac{1}{2} \int_0^\infty \ell_x^2 dx.
\end{equation*}
See \cite{Jan07} for the discussions on the distribution of the Brownian area $\int_0^1 e(t)dt$.
What's interesting is that the difference of these two terms is Gaussian with variance $\frac{1}{12}$.
The work \cite{Hai16} gave a pathwise proof of \eqref{eq:GSid} via Jeulin's identity \cite{Jeu},
and \cite{Cla21} provided a random forest interpretation of this identity.

\quad Here we give yet another proof of the identity \eqref{eq:GSid},
which relies on Pitman's SDE representation of $\{\ell_x\}_{x \ge 0}$ \cite{Pit99}.

\quad We start with the following lemma, which expresses the left side of \eqref{eq:GSid} 
in terms of $\ell_x$ and $I_x:=1 - \int_0^x \ell_y dy$.
\begin{lemma} \label{lem:lI}
We have:
\begin{equation}
\label{eq:path}
\int_0^1 e(t)dt -\frac{1}{2} \int_0^\infty \ell_x^2 dx = \int_0^\infty \left(I_x - \frac{1}{2} \ell_x^2 \right) dx.
\end{equation}
\end{lemma}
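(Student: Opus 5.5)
We need $\int_0^1 e(t)\,dt = \int_0^\infty I_x\,dx$. Once this holds, subtracting $\frac12\int_0^\infty \ell_x^2\,dx$ from both sides gives \eqref{eq:path}.

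The argument is the occupation times formula together with a layer-cake (Fubini) rearrangement. Because the excursion has unit length, the occupation density formula gives $\int_0^x \ell_y\,dy = \mathrm{Leb}\{t\in[0,1]: e(t)\le x\}$. Hence
\[
I_x = 1-\int_0^x \ell_y\,dy = \mathrm{Leb}\{t\in[0,1]: e(t)>x\}.
\]
Here we use the normalization of local times as occupation densities with respect to Lebesgue measure, $\int_0^1 f(e(t))\,dt=\int_0^\infty f(y)\ell_y\,dy$. This is the normalization under which $\int_0^1 e\,dt \stackrel{d}{=} \frac12\int \ell^2$ holds, as the paper cites.

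Next, write $e(t)=\int_0^\infty \mathbf 1_{\{e(t)>x\}}\,dx$ and apply Tonelli, which is justified since everything is nonnegative:
\[
\int_0^1 e(t)\,dt=\int_0^\infty\int_0^1 \mathbf 1_{\{e(t)>x\}}\,dt\,dx=\int_0^\infty I_x\,dx.
\]
Equivalently, one can take $f(y)=y$ in the occupation formula to get $\int_0^\infty y\,\ell_y\,dy$, and then integrate by parts against $I_x$. Here $I$ vanishes for $x\ge \max e$, and $dI_x=-\ell_x\,dx$.

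Finally, we need all the integrals to be finite, so that the difference of the two integrals can be written as a single integral. This holds because $\ell$ is continuous with compact support $[0,\max e]$ almost surely, and $I_x=0$ beyond $\max e$. The only point requiring any care is checking that the local time normalization matches the occupation density convention. With the standard one, no real obstacle remains.
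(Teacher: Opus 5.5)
Your proof is correct and is essentially the paper's argument: the occupation times formula combined with a Fubini/Tonelli interchange. Your layer-cake step via $I_x=\mathrm{Leb}\{t\in[0,1]: e(t)>x\}$ is the same computation as the paper's route, which takes $f(y)=y$ to get $\int_0^\infty x\ell_x\,dx=\int_0^\infty\int_x^\infty \ell_y\,dy\,dx$; that route is the alternative you also mention.
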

\begin{proof}
By the occupation time formula, we get:
\begin{equation*}
\begin{aligned}
\int_0^1 e(t)dt  &= \int_0^\infty x \ell_x dx \\
& = \int_0^\infty \int_x^\infty \ell_y dy\, dx = \int_0^\infty I_x dx,
\end{aligned}
\end{equation*}
where the second equality is by Fubini's theorem.
\end{proof}

\quad The following Pitman's theorem gives an SDE representation of $\{\ell_x\}_{x \ge 0}$.
\begin{lemma}[\cite{Pit99}, Corollary 5]
The process $\{\ell_x\}_{x \ge 0}$ solves the SDE:
\begin{equation}
\label{eq:Pitman}
d\ell_x = \left( 4 - \frac{\ell_x^2}{1 - \int_0^x \ell_y dy}\right) dx + 2 \sqrt{\ell_x} dB_x, \quad \ell_0 = 0,
\end{equation}
stopped at $0$.
Here $\{B_x\}_{x \ge 0}$ is standard Brownian motion.
\end{lemma}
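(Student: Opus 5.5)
The plan is to realize $\{\ell_x\}$ as a Doob $h$-transform of a zero-dimensional squared Bessel process. Conditioning on total area $1$ then produces exactly the drift in \eqref{eq:Pitman}.

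\textbf{Step 1 (unconditioned law).} Work under Itô's excursion measure $n$ for reflected Brownian motion, with local time normalized as in the occupation formula used in Lemma \ref{lem:lI}. By the Ray--Knight theorems combined with Williams' decomposition at the maximum, the process $x\mapsto \ell_x$ of an excursion under $n$ has the following law. It is $\sigma$-finite, and it is the excursion law (entrance from $0$) of a $\mathrm{BESQ}^0$ process, i.e. of the diffusion
\begin{equation*}
dZ_x = 2\sqrt{Z_x}\,dB_x .
\end{equation*}
More precisely, for every $x_0>0$, on $\{\ell_{x_0}>0\}$ the post-$x_0$ process is a $\mathrm{BESQ}^0$ started at $\ell_{x_0}$, run until absorption at $0$. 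The excursion length $\zeta=\int_0^\infty \ell_y\,dy$ is an additive functional of this process.

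\textbf{Step 2 (conditioning on $\zeta=1$).} The pair $(\ell_x,A_x)$, with $A_x=\int_0^x\ell_y\,dy$, is Markov. Given $\ell_x=a$, the remaining area $\int_x^\infty \ell_y\,dy$ is distributed as $\int_0^\infty Z_t\,dt$ for $\mathrm{BESQ}^0$ started at $a$. The key computation is the Laplace transform
\begin{equation*}
E_a\Big[\exp\Big(-\tfrac{\lambda^2}{2}\int_0^\infty Z_t\,dt\Big)\Big]=e^{-a\lambda/2}.
\end{equation*}
It follows from the standard formula for $\mathrm{BESQ}^\delta$ with $\delta=0$, or directly from the martingale $e^{-\lambda Z/2 - (\lambda^2/2)\int Z}$. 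Hence the remaining area is a stable-$\tfrac12$ variable with density
\begin{equation*}
h(a,s)=\frac{a}{2\sqrt{2\pi}}\,s^{-3/2}\exp\Big(-\frac{a^2}{8s}\Big),\qquad s>0 .
\end{equation*}
Conditioning on $\zeta=1$ means weighting by the space-time harmonic function $h(\ell_x,1-A_x)$ on $\{A_x<1\}$. By Girsanov, the drift becomes
\begin{equation*}
4\ell_x\,\partial_a\log h(\ell_x,1-A_x)=4\ell_x\Big(\frac{1}{\ell_x}-\frac{\ell_x}{4(1-A_x)}\Big)=4-\frac{\ell_x^2}{1-A_x},
\end{equation*}
while the diffusion coefficient $2\sqrt{\ell_x}$ is unchanged. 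This is \eqref{eq:Pitman}, with $1-A_x=I_x$.

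\textbf{Step 3 (identification with the normalized excursion).} Conditioning $n$ on $\zeta=1$ is well defined via Brownian scaling: $n(\,\cdot\mid\zeta=s)$ is a weakly continuous family, and the scaled excursion of length $1$ is the Brownian excursion $e$. So the law of $\{\ell_x\}$ for $e$ is obtained by disintegrating $n$ in $\zeta$. For each $x_0>0$, the Markov property at $x_0$ together with Step 2 identifies this disintegration with the $h$-transform on $[x_0,\infty)$. The result is a weak solution of \eqref{eq:Pitman} on $[x_0,\infty)$, stopped at $0$.

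I expect the main obstacle to be the behaviour at $x=0$, where $\ell_0=0$ is an entrance point. Both $h$ and the unconditioned law degenerate there, and the drift $\ell_x^2/I_x$ is singular as $I_x\to 0$. I would handle this by first proving the SDE on $[x_0,\infty)$ and then letting $x_0\downarrow 0$, using $\ell_{x_0}\to 0$ and the continuity of local times to extend it to $\ell_0=0$. Separately, I would check that $I_x>0$ up to the hitting time of $0$, so the stopped SDE makes sense, which follows from $\int_x^\infty \ell_y\,dy>0$ while $\ell_x>0$.
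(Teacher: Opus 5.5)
Your argument is correct. The paper itself gives no proof of this lemma; it imports it from \cite{Pit99}, whose title advertises a derivation from the height profiles of random trees and forests, i.e.\ from Galton--Watson processes conditioned on their total progeny.

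\textbf{Comparison with the cited route.} Your proposal carries out the continuum version of that conditioning directly on the Ray--Knight picture. Under It\^o's measure the profile follows the $\mathrm{BESQ}^0$ entrance law. Given $\ell_x=a$, the remaining area has the stable-$\tfrac12$ density $h(a,s)=\frac{a}{2\sqrt{2\pi}}s^{-3/2}e^{-a^2/(8s)}$. Your Laplace transform $e^{-a\lambda/2}$ and this density are right, and one checks directly that $2a\,\partial_a^2h-a\,\partial_sh=0$. The Doob transform then adds the drift $4\ell\,\partial_a\log h=4-\ell^2/I$.

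Your route gains three things over the citation. It is self-contained, and it makes the drift transparent. It also extends at once: running the same computation from $\mathrm{BESQ}^0$ started at $c>0$ (Ray--Knight at the inverse local time $\tau_c$, conditioned on $\tau_c=1$) gives Pitman's bridge version used in Proposition~\ref{prop:RB}, with no entrance boundary at all.

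The citation gains uniqueness, and it disposes of the boundary technicalities. Your argument yields only that $\ell$ is a weak solution. That is all the paper needs, since the proof of Theorem~\ref{thm:main} uses only the semimartingale decomposition of $\ell$ with $d\langle \ell\rangle_x=4\ell_x\,dx$.

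\textbf{Two points to tighten.}

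First, $h(Z_x,s-A_x)$ is a continuous local martingale under $\mathrm{BESQ}^0$ but not a true martingale. Indeed, $E_a[h(Z_x,s-A_x);Z_x>0]/h(a,s)$ equals the conditioned probability that extinction has not yet occurred by level $x$. So the change of measure holds only on $\{x<T_0\}$, and Girsanov must be localized, e.g.\ at $T_\varepsilon=\inf\{x:\ell_x\le\varepsilon\}$. This fits the SDE being stopped at $0$.

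Second, the singularity of $\ell_x^2/I_x$ sits at the top of the excursion, where $\ell$ and $I$ vanish together. It is not at the entrance: at $x=0$ one has $I_0=1$ and the drift is close to $4$. So the limit $x_0\downarrow0$ only needs $\ell_{x_0}\to0$, bounded convergence after localization, and triviality of the initial $\sigma$-field.

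Near the extinction level $V$, finiteness of $\int^{V}\ell_x^2/I_x\,dx$ follows from the SDE on $[x_0,V)$. The integrand is nonnegative. Both $\ell_x$ and the martingale part converge as $x\uparrow V$, the latter because its total bracket is $\int 4\ell_x\,dx=4$.
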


\quad Now we are ready to prove Theorem \ref{thm:main}.
\begin{proof}[Proof of Theorem \ref{thm:main}]
Applying It\^{o}'s formula to $\ell_x I_x$, we get:
\begin{equation*}
\begin{aligned}
d(\ell_x I_x) &= \ell_x dI_x + I_x d\ell_x \\
& = (4 I_x - 2 \ell_x^2)dx + 2 I_x \sqrt{\ell_x} dB_x.
\end{aligned}
\end{equation*}
where we use that $dI_x = -\ell_x dx$ and the SDE \eqref{eq:Pitman}.
So $(I_x - \frac{1}{2} \ell_x^2)dx = \frac{1}{4} d(\ell_x I_x) - \frac{1}{2}I_x \sqrt{\ell_x} dB_x$.
Integrating over $[0,\infty)$, we have:
\begin{equation}
\label{eq:4}
\int_0^\infty \left(I_x - \frac{1}{2} \ell_x^2\right)dx = -\frac{1}{2} \int_0^\infty I_x \sqrt{\ell_x}dB_x,
\end{equation}
where we use that $\ell_x I_x \to 0$ as $x \to \infty$.
Note that the right side of \eqref{eq:4} is a Gaussian variable with mean $0$,
and variance
\begin{equation}
\label{eq:5}
\frac{1}{4} \int_0^\infty I_x^2 \ell_x dx = \frac{1}{4} \int_0^1 z^2 dz = \frac{1}{12},
\end{equation}
because $dI_x = -\ell_x dx$, and $I_0 = 1$ and $I_\infty = 0$.
Combining \eqref{eq:path}, \eqref{eq:4} and \eqref{eq:5} yields \eqref{eq:GSid}.
\end{proof}

\quad Next, we provide several generalizations encompassing Theorem \ref{thm:main} and Pitman's SDE representation of local times. 

\smallskip
{\bf (A) General test function}. The following proposition is a simple generalization of \eqref{eq:4}. 
\begin{proposition}
\label{coro:test}
Assume that $\phi: \mathbb{R} \to \mathbb{R}$ be sufficiently smooth and $\int_0^1\phi(x)^2dx < \infty$.
We have:
\begin{equation}
\label{eq:6}
\int_0^\infty \phi(I_x) dx - \frac{1}{4} \int_0^\infty \ell_x^2 \left( \frac{\phi(I_x)}{I_x} + \phi'(I_x) \right)dx 
\stackrel{d}{=} \mathcal{N}\left(0, \frac{1}{4} \int_0^1\phi(x)^2 dx\right).
\end{equation}
\end{proposition}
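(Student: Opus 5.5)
We mimic the proof of Theorem \ref{thm:main}, replacing the product $\ell_x I_x$ by $\ell_x \psi(I_x)$ for a suitable $\psi$ built from $\phi$. Since $dI_x = -\ell_x\,dx$ has no martingale part, It\^{o}'s formula together with Pitman's SDE \eqref{eq:Pitman} gives
\begin{equation*}
d\bigl(\ell_x \psi(I_x)\bigr) = \psi(I_x)\Bigl(4 - \frac{\ell_x^2}{I_x}\Bigr)dx - \ell_x^2 \psi'(I_x)\,dx + 2\psi(I_x)\sqrt{\ell_x}\,dB_x .
\end{equation*}
We take $\psi = \phi$. Dividing by $4$ and rearranging yields
\begin{equation*}
\Bigl(\phi(I_x) - \frac14 \ell_x^2\Bigl(\frac{\phi(I_x)}{I_x} + \phi'(I_x)\Bigr)\Bigr)dx = \frac14\, d\bigl(\ell_x\phi(I_x)\bigr) - \frac12 \phi(I_x)\sqrt{\ell_x}\,dB_x .
\end{equation*}
For $\phi(z)=z$ this is exactly the identity used in the proof of Theorem \ref{thm:main}.

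Next we integrate over $[0,\infty)$. The process is stopped at the time $\zeta=\inf\{x:\ell_x=0\}$, which is the maximum of $e$ and is finite. At $x=0$ we have $\ell_0=0$, and for $x \ge \zeta$ we have $\ell_x=0$, so the boundary term $\frac14[\ell_x\phi(I_x)]_0^\infty$ vanishes, provided $\phi$ is bounded near $I_0=1$ and $I_\zeta=0$. This gives the analogue of \eqref{eq:4}:
\begin{equation*}
\int_0^\infty \phi(I_x)\,dx - \frac14\int_0^\infty \ell_x^2\Bigl(\frac{\phi(I_x)}{I_x}+\phi'(I_x)\Bigr)dx = -\frac12\int_0^\infty \phi(I_x)\sqrt{\ell_x}\,dB_x .
\end{equation*}
Strictly speaking, the integrals run over $[0,\zeta]$. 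We should also check whether $\int_0^\infty\phi(I_x)\,dx$ is to be read with $\phi(0)$ on $[\zeta,\infty)$; this requires $\phi(0)=0$ for finiteness. Otherwise we simply interpret all integrals as over $[0,\zeta]$, which is the natural reading given "stopped at $0$".

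For the Gaussian law, the quadratic variation of the stochastic integral is deterministic:
\begin{equation*}
\frac14\int_0^\zeta \phi(I_x)^2\ell_x\,dx = \frac14\int_0^1\phi(z)^2\,dz,
\end{equation*}
by the substitution $z=I_x$, $dz=-\ell_x\,dx$, with $I$ running from $1$ down to $0$. A continuous local martingale $M$ with deterministic total quadratic variation $\sigma^2<\infty$ satisfies $M_\infty \sim \mathcal N(0,\sigma^2)$. This follows from the Dambis--Dubins--Schwarz theorem, $M_\infty = W_{\langle M\rangle_\infty}=W_{\sigma^2}$, or by checking that $\exp(i\theta M_x+\frac{\theta^2}{2}\langle M\rangle_x)$ is a bounded martingale. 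The hypothesis $\int_0^1\phi^2<\infty$ is exactly what makes this finite.

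The main obstacle is the behaviour near the endpoints. Near $x=\zeta$ we have $I_x\to 0$, and the drift term $\ell_x^2\phi(I_x)/I_x$ (inherited from the singular drift in \eqref{eq:Pitman}) may fail to be integrable unless $\phi$ vanishes suitably at $0$. To handle this, we would first prove the identity up to stopping times $\tau_\varepsilon=\inf\{x: I_x\le\varepsilon \text{ or } \ell_x \ge 1/\varepsilon\}$, where everything is a genuine semimartingale identity. We would then let $\varepsilon\to0$, using the martingale convergence of the stochastic integral (its quadratic variation converges to $\frac14\int_0^1\phi^2$) and the vanishing of $\ell_{\tau_\varepsilon}\phi(I_{\tau_\varepsilon})$. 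The latter uses $\ell_x\to0$ as $x\to\zeta$ and local boundedness of $\phi$. The left side then converges because the right side does, which interprets the drift integral as an improper limit. The "sufficiently smooth" assumption is used precisely to guarantee these limits.
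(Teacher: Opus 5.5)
Your proof is correct and follows the paper's argument: apply It\^{o}'s formula to $\ell_x\phi(I_x)$ using Pitman's SDE, integrate so that the boundary term $\frac14\ell_x\phi(I_x)$ vanishes at both ends, and identify the remaining stochastic integral as Gaussian because its quadratic variation is deterministic, namely $\frac14\int_0^1\phi(z)^2\,dz$. Your additional remarks are sound refinements of points the paper leaves implicit: the localization near $\zeta$, and the observation that $\int_0^\infty\phi(I_x)\,dx$ only makes sense over $[0,\zeta]$ unless $\phi(0)=0$.
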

\begin{proof}
It suffices to apply It\^{o}'s formula to $\ell_x \phi(I_x)$, and the rest of the proof is the same as above.
\end{proof}

\quad In fact, the term on the left side of \eqref{eq:6} can be viewed as an analogue of Wick-renormalized local times,
whose remainder is Gaussian.
Taking $\phi(x) = x$, the identity \eqref{eq:6} yields Theorem \ref{thm:main}.
Specializing to $\phi(x) = x^n$, $n \ge 1$ gives the following result of Hariya \cite{Hai16}.
\begin{corollary}[\cite{Hai16}, Proposition 2.2]
\label{coro:gen}
For $0 \le t_1 < \cdots < t_n \le 1$, we have:
\begin{equation}
2 \int_{[0,1]^n} \min(e(t_1), \ldots, e(t_n))dt_1 \cdots dt_n - \frac{n+1}{2} \int_0^\infty  I_x^{n-1} \ell_x^2dx
\stackrel{d}{=} \mathcal{N}\left(0, \frac{1}{2n+1} \right),
\end{equation}
\end{corollary}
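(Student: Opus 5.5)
Take $\phi(x)=x^n$ in Proposition \ref{coro:test}. Then $\phi(I_x)/I_x+\phi'(I_x) = I_x^{n-1} + nI_x^{n-1} = (n+1)I_x^{n-1}$, and $\frac14\int_0^1 x^{2n}dx = \frac{1}{4(2n+1)}$, so \eqref{eq:6} gives
\begin{equation*}
\int_0^\infty I_x^n dx - \frac{n+1}{4}\int_0^\infty I_x^{n-1}\ell_x^2 dx \stackrel{d}{=} \mathcal{N}\left(0,\frac{1}{4(2n+1)}\right).
\end{equation*}
Multiplying by $2$ yields a Gaussian of variance $\frac{1}{2n+1}$ and the second term $\frac{n+1}{2}\int_0^\infty I_x^{n-1}\ell_x^2dx$ as in the corollary. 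It therefore remains to check the pathwise identity
\begin{equation*}
\int_{[0,1]^n}\min(e(t_1),\ldots,e(t_n))\,dt_1\cdots dt_n = \int_0^\infty I_x^n\,dx,
\end{equation*}
which is the $n$-fold analogue of Lemma \ref{lem:lI}. (The ordering $t_1<\cdots<t_n$ in the statement is immaterial, since the integrand is symmetric and the integral is over the whole cube.)

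To verify this identity, write $\min_i e(t_i) = \int_0^\infty \mathbf{1}\{e(t_i)>x \ \forall i\}dx$ and apply Fubini:
\begin{equation*}
\int_{[0,1]^n}\min_i e(t_i)\,dt = \int_0^\infty \left(\int_0^1 \mathbf{1}\{e(t)>x\}dt\right)^n dx.
\end{equation*}
By the occupation time formula, $\int_0^1 \mathbf{1}\{e(t)>x\}dt = \int_x^\infty \ell_y dy = 1-\int_0^x\ell_y dy = I_x$, where we use $\int_0^\infty \ell_y dy = 1$. For $n=1$ this recovers Lemma \ref{lem:lI}.

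The main obstacle is justifying Proposition \ref{coro:test} for $\phi(x)=x^n$. Following the proof of Theorem \ref{thm:main}, It\^o's formula with \eqref{eq:Pitman} gives
\begin{equation*}
d(\ell_x I_x^n) = \left(4I_x^n - (n+1)\ell_x^2 I_x^{n-1}\right)dx + 2I_x^n\sqrt{\ell_x}\,dB_x.
\end{equation*}
Integrating over $[0,\infty)$ uses the boundary terms $\ell_0=0$ and $\ell_x I_x^n\to 0$; the latter holds because $\ell$ is absorbed at $0$ at the top of the excursion, so both factors vanish eventually. The stochastic integral $\int_0^\infty I_x^n\sqrt{\ell_x}\,dB_x$ has quadratic variation $\int_0^\infty I_x^{2n}\ell_x\,dx = \int_0^1 z^{2n}dz$, computed via the substitution $z=I_x$, $dz=-\ell_x dx$. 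This is deterministic, so by Dambis--Dubins--Schwarz the integral is exactly Gaussian, $\mathcal{N}(0,\frac{1}{2n+1})$. The factor $\frac12$ coming from solving for $\int_0^\infty I_x^n dx$ then produces the variance $\frac{1}{4(2n+1)}$, consistent with the display above.
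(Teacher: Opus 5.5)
Your proof is correct and follows the paper's route: the paper obtains this corollary by specializing Proposition~\ref{coro:test} to $\phi(x)=x^n$, and your It\^o computation for $\ell_x I_x^n$ is exactly the proof of that proposition. You also spell out what the paper leaves implicit, namely the pathwise identity $\int_{[0,1]^n}\min_i e(t_i)\,dt=\int_0^\infty I_x^n\,dx$ (via Fubini and the occupation time formula) and the factor-$2$ rescaling that turns variance $\frac{1}{4(2n+1)}$ into $\frac{1}{2n+1}$.
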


\smallskip
{\bf (B) Level $a \ge 0$}. The following proposition generalizes \eqref{eq:6} to integrals from $a \ge 0$.
\begin{proposition}
Let $\{\mathcal{F}_a\}_{a \ge 0}$ be the excursion filtrations\footnote{See \cite{Mc86, Wil79} for background on excursion filtrations.}.
Assume that $\phi: \mathbb{R} \to \mathbb{R}$ be sufficiently smooth and $\int_0^1\phi(x)^2dx < \infty$.
We have:
\begin{equation}
\int_a^\infty \phi(I_x) dx - \frac{1}{4} \int_a^\infty \ell_x^2 \left( \frac{\phi(I_x)}{I_x} + \phi'(I_x) \right)dx \,\bigg|\, \mathcal{F}_a
\stackrel{d}{=} \mathcal{N}\left(-\frac{\ell_a \phi(I_x)}{4}, \frac{1}{4} \int_{I_x}^\infty \phi(y)^2dy \right).
\end{equation}
\end{proposition}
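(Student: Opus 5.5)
The plan is to repeat the It\^{o}-formula argument from the proof of Theorem \ref{thm:main} and Proposition \ref{coro:test}, now integrating from $a$ instead of $0$ and conditioning on $\mathcal{F}_a$. I read the statement with $I_x$ in the mean and in the variance replaced by $I_a$; I expect these are typos. I also expect the variance to be $\frac14\int_0^{I_a}\phi(y)^2dy$, which is what the computation below produces and which reduces to $\frac14\int_0^1\phi^2$ at $a=0$, consistent with \eqref{eq:6}.

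\textbf{Step 1: the It\^{o} identity.} Since $dI_x=-\ell_x dx$ has finite variation, the SDE \eqref{eq:Pitman} gives
\begin{equation*}
d\big(\ell_x\phi(I_x)\big) = \phi(I_x)\Big(4-\frac{\ell_x^2}{I_x}\Big)dx - \ell_x^2\phi'(I_x)dx + 2\phi(I_x)\sqrt{\ell_x}\,dB_x .
\end{equation*}
Rearranging, we get
\begin{equation*}
\phi(I_x)dx - \frac14\ell_x^2\Big(\frac{\phi(I_x)}{I_x}+\phi'(I_x)\Big)dx = \frac14 d\big(\ell_x\phi(I_x)\big) - \frac12\phi(I_x)\sqrt{\ell_x}\,dB_x .
\end{equation*}
Integrating over $[a,\infty)$ and using $\ell_x\phi(I_x)\to0$ at the extinction time $\zeta$ (where $\ell_\zeta=0$ and $I_\zeta=0$), the left side of the claimed identity equals
\begin{equation*}
-\frac{\ell_a\phi(I_a)}{4} - \frac12 M, \qquad M:=\int_a^\infty \phi(I_x)\sqrt{\ell_x}\,dB_x .
\end{equation*}

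\textbf{Step 2: conditional Gaussianity.} To interpret the decomposition conditionally, I need Pitman's SDE to hold with $B$ a Brownian motion with respect to the excursion filtration $\{\mathcal{F}_x\}$. This is the content of Pitman's Ray--Knight type derivation \cite{Pit99}, and I would cite it. Then $\ell_a\phi(I_a)$ is $\mathcal{F}_a$-measurable. Moreover, by the substitution $z=I_x$,
\begin{equation*}
\langle M\rangle_\infty=\int_a^\infty\phi(I_x)^2\ell_x\,dx=\int_0^{I_a}\phi(z)^2dz,
\end{equation*}
which is $\mathcal{F}_a$-measurable. For real $\theta$, consider the exponential martingale
\begin{equation*}
\exp\Big(i\theta(M_x-M_a)+\frac{\theta^2}{2}\big(\langle M\rangle_x-\langle M\rangle_a\big)\Big), \qquad x\ge a .
\end{equation*}
It is bounded, since $\langle M\rangle_\infty-\langle M\rangle_a\le\int_0^1\phi^2<\infty$. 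Optional stopping at $\infty$ then gives
\begin{equation*}
E\big[e^{i\theta M}\mid\mathcal{F}_a\big]=\exp\Big(-\frac{\theta^2}{2}\int_0^{I_a}\phi^2\Big).
\end{equation*}
Hence $-\frac12 M$ is, conditionally on $\mathcal{F}_a$, distributed as $\mathcal{N}\big(0,\frac14\int_0^{I_a}\phi^2\big)$. Adding the $\mathcal{F}_a$-measurable shift $-\frac{\ell_a\phi(I_a)}{4}$ yields the claim.

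\textbf{Main obstacle.} The delicate points are technical rather than computational.
\begin{enumerate}
\item The boundary behaviour at $\zeta$: I must check that $\ell_x\phi(I_x)\to0$ and that the stochastic integral converges. This needs mild conditions on $\phi$ near $0$, for instance boundedness, or using $\ell_x^2/I_x$ integrability together with the SDE.
\item The drift $\ell_x^2\phi(I_x)/I_x$ is singular as $I_x\to0$. I would handle this by localizing at the times $\tau_\epsilon=\inf\{x:I_x\le\epsilon\}$ and letting $\epsilon\downarrow0$.
\item I must justify that the Brownian motion in \eqref{eq:Pitman} is adapted to the excursion filtration, so that conditioning on $\mathcal{F}_a$ is legitimate.
\end{enumerate}
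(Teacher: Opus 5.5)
Your proposal is correct and is exactly the argument the paper intends. The paper gives no separate proof here, only the It\^{o} computation for $\ell_x\phi(I_x)$ from Proposition~\ref{coro:test}, now integrated over $[a,\infty)$; the $\mathcal F_a$-measurable quadratic variation $\int_a^\infty \phi(I_x)^2\ell_x\,dx=\int_0^{I_a}\phi(z)^2dz$ then gives conditional Gaussianity, and your exponential-martingale step makes that rigorous. Your reading of the statement is also right: $I_x$ should be $I_a$ and the variance should be $\frac14\int_0^{I_a}\phi(y)^2dy$. The subsequent corollary confirms this, since its normalization by $I_a^{3/2}$ corresponds to variance $\frac14\int_0^{I_a}y^2\,dy=I_a^3/12$.
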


\quad In particular, taking $\phi(x) = x$ yields the following corollary.
\begin{corollary}
We have on $\{I_a > 0\}$, 
\begin{equation}
\frac{1}{I_a^{3/2}}\left(\int_0^1 (e(t)-a)_{+} - \frac{1}{2} \int_a^\infty \ell_x^2 dx + \frac{1}{4} \ell_a I_a\right) 
\stackrel{d}{=} \mathcal{N}\left(0, \frac{1}{12} \right),
\end{equation}
which is independent of $\mathcal{F}_a$.
\end{corollary}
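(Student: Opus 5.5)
The corollary should follow by specializing Proposition (B) to $\phi(x)=x$. To keep the argument self-contained, and to fix the evident slips in that statement ($I_x$ should read $I_a$, and the variance integral should run over $[0,I_a]$), I would redo the It\^{o} computation from level $a$ directly.

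\emph{Step 1 (pathwise identity).} By the occupation time formula and Fubini, exactly as in Lemma \ref{lem:lI},
\begin{equation*}
\int_a^\infty I_x\,dx=\int_a^\infty\int_x^\infty \ell_y\,dy\,dx=\int_a^\infty (y-a)\ell_y\,dy=\int_0^1 (e(t)-a)_+\,dt .
\end{equation*}
Here I use $I_x=\int_x^\infty \ell_y\,dy$, which holds because $\int_0^\infty \ell_y\,dy=1$. With $\phi(x)=x$ we have $\phi(I_x)/I_x+\phi'(I_x)=2$. Hence the quantity in Proposition (B) equals $\int_0^1(e(t)-a)_+dt-\frac12\int_a^\infty \ell_x^2dx$.

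\emph{Step 2 (It\^{o} from level $a$).} As in the proof of Theorem \ref{thm:main}, Pitman's SDE \eqref{eq:Pitman} and $dI_x=-\ell_x\,dx$ give
\begin{equation*}
\Big(I_x-\tfrac12\ell_x^2\Big)dx=\tfrac14\, d(\ell_xI_x)-\tfrac12 I_x\sqrt{\ell_x}\,dB_x .
\end{equation*}
Integrating over $[a,\infty)$ and using $\ell_xI_x\to0$ yields
\begin{equation*}
\int_0^1 (e(t)-a)_+dt-\frac12\int_a^\infty \ell_x^2dx+\frac14\ell_aI_a=-\frac12\int_a^\infty I_x\sqrt{\ell_x}\,dB_x .
\end{equation*}

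\emph{Step 3 (conditional Gaussianity).} I will regard $B$ as a Brownian motion in the excursion filtration $\{\mathcal F_x\}$, which is the filtration in which Pitman's SDE is posed. Then $M_s:=-\frac12\int_a^{a+s} I_x\sqrt{\ell_x}\,dB_x$ is a continuous $(\mathcal F_{a+s})$-local martingale. Its total quadratic variation is
\begin{equation*}
\frac14\int_a^\infty I_x^2\ell_x\,dx=\frac14\int_0^{I_a} z^2\,dz=\frac{I_a^3}{12},
\end{equation*}
by the substitution $z=I_x$, $dz=-\ell_x\,dx$. This total is $\mathcal F_a$-measurable. By Dambis--Dubins--Schwarz applied conditionally on $\mathcal F_a$ (equivalently, via the exponential martingale $\exp(i\lambda M_s+\frac{\lambda^2}{2}\langle M\rangle_s)$), we get
\begin{equation*}
\mathbb E\big[e^{i\lambda M_\infty}\mid\mathcal F_a\big]=e^{-\lambda^2 I_a^3/24}.
\end{equation*}
Dividing by $I_a^{3/2}$ on $\{I_a>0\}$ then gives
\begin{equation*}
\mathbb E\big[e^{i\lambda M_\infty/I_a^{3/2}}\mid\mathcal F_a\big]=e^{-\lambda^2/24}.
\end{equation*}
This conditional characteristic function is deterministic, so the normalized variable is $\mathcal N(0,\frac1{12})$ and independent of $\mathcal F_a$.

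\emph{Main obstacle.} The delicate points are technical. First, $B$ must be a Brownian motion with respect to the excursion filtration, so that the conditional DDS argument is legitimate. Second, the exponential local martingale must be a true martingale; this follows because its quadratic variation is bounded by $1/12$. Third, one needs $\ell_xI_x\to0$; this is immediate, since $\ell$ vanishes beyond the maximum of $e$.
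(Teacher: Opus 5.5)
Your proposal is correct and follows the paper's route. The paper obtains the corollary by setting $\phi(x)=x$ in Proposition (B), whose proof is the same It\^{o} computation on $\ell_x I_x$, here integrated from level $a$, with conditional Gaussianity coming from the $\mathcal{F}_a$-measurable quadratic variation $\frac14\int_a^\infty I_x^2\ell_x\,dx = I_a^3/12$. Your corrections of the slips in Proposition (B) are right: the mean should read $-\ell_a\phi(I_a)/4$ and the variance $\frac14\int_0^{I_a}\phi(y)^2\,dy$, and this corrected variance is exactly what makes the normalization by $I_a^{3/2}$ consistent.
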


\smallskip
{\bf (C) Reflected Brownian bridge}.
A Brownian excursion spends ``zero" time at level $0$.
It is easy to extend Corollary \ref{coro:gen} to reflected Brownian bridges conditioned on its local times (see \cite{Aldous98} for background).
\begin{proposition} \label{prop:RB}
For $c \ge 0$, let $\{\ell^c_x\}_{x \ge 0}$ be the local times process of a reflected Brownian bridge $\{|B^{\tiny \mbox{br}}_t|\}_{0 \le t \le 1}$
conditioned on $\ell_0 = c$,
and $I_x^c: = 1 - \int_0^x \ell^c_y dy$.
Assume that $\phi: \mathbb{R} \to \mathbb{R}$ be sufficiently smooth and $\int_0^1\phi(x)^2dx < \infty$.
We have:
\begin{equation}
\label{eq:7}
\int_0^\infty \phi(I_x^c) dx - \frac{1}{4} \int_0^\infty (\ell_x^c)^2 \left( \frac{\phi(I_x^c)}{I_x^c} + \phi'(I_x^c) \right)dx 
\stackrel{d}{=} \mathcal{N}\left(-\frac{c \phi(1)}{4}, \frac{1}{4} \int_0^1\phi(x)^2 dx\right).
\end{equation}
\end{proposition}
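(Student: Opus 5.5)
The plan is to repeat the argument used for Theorem~\ref{thm:main} and Proposition~\ref{coro:test}. The one new input is an SDE for $\{\ell^c_x\}_{x\ge 0}$, now started from $\ell^c_0=c$ instead of $0$. By Pitman \cite{Pit99} (see also \cite{Aldous98}), the local-times process of a reflected Brownian bridge conditioned on $\ell_0=c$ satisfies the same equation as \eqref{eq:Pitman}:
\begin{equation*}
d\ell^c_x = \left(4 - \frac{(\ell^c_x)^2}{I^c_x}\right)dx + 2\sqrt{\ell^c_x}\,dB_x, \qquad \ell^c_0 = c,
\end{equation*}
stopped when it hits $0$. Heuristically this holds because the excursion case is the $c=0$ member of this family: conditioning on $\ell_0=c$ only changes the initial condition of the Ray--Knight type diffusion, and the drift $-\ell^2/I$ still comes from the total-time constraint $\int_0^\infty \ell^c_y\,dy=1$. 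I take this SDE as the starting point and cite it.

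Next I apply It\^o's formula to $\ell^c_x\phi(I^c_x)$. Since $dI^c_x=-\ell^c_x\,dx$ has finite variation, there is no cross-variation term, and
\begin{equation*}
d\big(\ell^c_x\phi(I^c_x)\big) = \left(4\phi(I^c_x) - (\ell^c_x)^2\Big(\frac{\phi(I^c_x)}{I^c_x}+\phi'(I^c_x)\Big)\right)dx + 2\phi(I^c_x)\sqrt{\ell^c_x}\,dB_x .
\end{equation*}
Dividing by $4$ and integrating over $[0,\infty)$, the left side of \eqref{eq:7} becomes
\begin{equation*}
\frac14\Big[\ell^c_x\phi(I^c_x)\Big]_0^\infty - \frac12\int_0^\infty \phi(I^c_x)\sqrt{\ell^c_x}\,dB_x .
\end{equation*}
The boundary term at $x=\infty$ vanishes, exactly as in \eqref{eq:4}. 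At $x=0$ we have $\ell^c_0=c$ and $I^c_0=1$, so the lower boundary contributes $-\frac{c\phi(1)}{4}$. This is the mean shift in \eqref{eq:7}.

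The stochastic integral has quadratic variation
\begin{equation*}
\frac14\int_0^\infty \phi(I^c_x)^2\ell^c_x\,dx = \frac14\int_0^1\phi(z)^2\,dz,
\end{equation*}
using the substitution $z=I^c_x$, $dz=-\ell^c_x\,dx$, with $z$ running from $1$ down to $0$. This quadratic variation is deterministic. By the Dambis--Dubins--Schwarz theorem, a continuous local martingale whose total quadratic variation is the constant $\sigma^2$ equals a Brownian motion evaluated at time $\sigma^2$, hence is $\mathcal N(0,\sigma^2)$. That gives \eqref{eq:7}.

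The main technical obstacle is the boundary behaviour at the absorption level $\zeta=\inf\{x:\ell^c_x=0\}$, where $I^c_\zeta=0$. Two things need checking there. First, $\ell^c_x\phi(I^c_x)\to 0$ as $x\to\zeta$. Second, the drift term $(\ell^c)^2\phi(I^c)/I^c$ is integrable near $\zeta$, since $I^c\to0$ there. I would handle both by localizing: run the argument up to $\tau_\varepsilon=\inf\{x: I^c_x\le\varepsilon\}$, where all quantities are bounded, and then let $\varepsilon\to0$. The martingale part converges in $L^2$ because $\int_0^1\phi^2<\infty$. The boundary term tends to $0$ because $\ell^c_x\to0$ at $\zeta$ and $\phi$ is smooth near $0$. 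Convergence of the drift integral then follows from the identity itself. This is the same issue already implicit in the excursion case ($c=0$), so nothing beyond the argument for Proposition~\ref{coro:test} should be needed.
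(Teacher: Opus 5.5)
Your proposal is correct and follows the paper's proof. You cite Pitman's SDE with initial value $\ell^c_0=c$, apply It\^o's formula to $\ell^c_x\phi(I^c_x)$, read off the mean $-c\phi(1)/4$ from the lower boundary term, and observe that the quadratic variation is the deterministic constant $\frac14\int_0^1\phi(z)^2\,dz$. Your localization at $\tau_\varepsilon$ and the Dambis--Dubins--Schwarz step make explicit what the paper leaves implicit.

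One caveat applies to the statement itself, and your localization already handles it: unless $\phi(0)=0$, the integrals must be read over $[0,\zeta)$ rather than $[0,\infty)$. After absorption $\phi(I^c_x)=\phi(0)$, so $\int_0^\infty\phi(I^c_x)\,dx$ diverges, and the drift-$4$ equation no longer holds there.
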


\quad The proof is the same as Proposition \ref{coro:test} by applying \cite[Theorem 4]{Pit99} that $\{\ell_x^c\}_{x \ge 0}$ solves the SDE \eqref{eq:Pitman} with the initial condition $\ell_0^a = c$.
The  conditioning $\ell_0 = c$ yields a nonzero mean $-\frac{c \phi(1)}{4}$.

\smallskip
{\bf (D) Brownian meander}.
We conclude with a version of Theorem \ref{thm:main} for Brownian meander (see \cite[Chapter 0]{Pitbook} for background).
\begin{proposition}
Let $\{m(t)\}_{0 \le t \le 1}$ be a Brownian meander on $[0,1]$.
Assume that $\phi: \mathbb{R} \to \mathbb{R}$ be sufficiently smooth and $\int_0^1\phi(x)^2dx < \infty$.
We have:
\begin{equation}
\frac{1}{2}\int_0^1\left( \frac{\phi(t)}{M(t)} - M(t) \left( \frac{\phi(t)}{t} + \phi'(t)\right)\right)dt + \frac{M(1)\phi(1)}{4}
\stackrel{d}{=} \mathcal{N}\left(0, \frac{1}{4} \int_0^1 \phi(x)^2 dx\right),
\end{equation}
which is independent of $M(1)$.
In particular, for $\alpha > 0$,
\begin{equation}
\frac{1}{2} \int_0^1 \left(\frac{t^\alpha}{M(t)} - (\alpha +1)t^{\alpha - 1} M(t)\right)dt + \frac{M(1)}{4}\stackrel{d}{=} \mathcal{N}\left(0, \frac{1}{4(1+ 2 \alpha)}\right).
\end{equation}
\end{proposition}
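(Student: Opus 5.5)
The plan is to reduce the meander statement to the same Itô-formula argument used for Theorem \ref{thm:main} and Proposition \ref{coro:test}, after reparametrizing by \emph{occupation time} instead of level. I take $M(t)$ to be the (suitably normalized) local time of $m$ at the level above which $m$ spends total time $t$. Concretely, with $\ell_x$ the local times of $m$ and $I_x = 1-\int_0^x \ell_y\,dy$, $M(t)$ is a constant multiple of $\ell_{x(t)}$, where $x(t)$ is the inverse of $x \mapsto I_x$. In that time scale $t$ runs from $1$ (level $0$) down to $0$ (the top), so $M(1)$ is the local time at the bottom level.

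The substitution $t = I_x$, $dt = -\ell_x\,dx$ turns the integrals of Proposition \ref{coro:test} into the ones in the present statement:
\[
\int_0^\infty \phi(I_x)\,dx = \int_0^1 \frac{\phi(t)}{\ell_{x(t)}}\,dt, \qquad \int_0^\infty \ell_x^2\Big(\frac{\phi(I_x)}{I_x}+\phi'(I_x)\Big)dx = \int_0^1 \ell_{x(t)}\Big(\frac{\phi(t)}{t}+\phi'(t)\Big)dt.
\]
This explains the shape of the left-hand side. The term $\frac{M(1)\phi(1)}{4}$ plays the role of the boundary term $\frac{c\phi(1)}{4}$ in Proposition \ref{prop:RB}, with $c$ the local time at the bottom.

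First, I will obtain an autonomous SDE for $M$ in the variable $t$. Starting from the Pitman-type SDE \eqref{eq:Pitman} for the local times of the meander, I time-change by $t = I_x$. Since $dt = -\ell\,dx$, the martingale part $2\sqrt{\ell}\,dB_x$ becomes a Brownian motion in $t$ with constant diffusion coefficient. The drift $\big(4 - \ell^2/I\big)dx$ becomes a drift of the form $\big(\frac{a}{M(t)} - \frac{b\,M(t)}{t}\big)dt$. Up to the normalizing constants, I expect
\[
dM(t) = \Big(\frac{1}{2M(t)} - \frac{M(t)}{2t}\Big)dt + \frac12\, dW_t ,
\]
run in the appropriate direction of $t$. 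The meander input needed here is the analogue of Pitman's Theorem 4 in \cite{Pit99}. It says that, conditionally on the bottom local time, the meander's local-time process is governed by the same SDE \eqref{eq:Pitman}, now started from the random value $\ell_0 = c$, as in Proposition \ref{prop:RB}. Equivalently, it may be phrased through the Ray--Knight description of the meander as a mixture of reflected bridges over the bottom local time.

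Second, I apply Itô's formula to $M(t)\phi(t)$:
\[
d\big(M\phi\big) = \phi\,dM + M\phi'\,dt .
\]
Integrating over $[0,1]$, using $M(0)\phi(0) = 0$ (the top of the path has vanishing local time) and substituting the SDE, I collect the $dt$-terms. This should give exactly
\[
\frac12\int_0^1\Big(\frac{\phi}{M} - M\Big(\frac{\phi}{t}+\phi'\Big)\Big)dt + \frac{M(1)\phi(1)}{4} = c_0\int_0^1 \phi(t)\,dW_t
\]
for a constant $c_0$ with $c_0^2 = \frac14$. The right side is a Wiener integral with a \emph{deterministic} integrand. Hence it is $\mathcal N\big(0,\frac14\int_0^1\phi^2\big)$, and independence from $M(1)$ follows because the integrand does not depend on the starting value. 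This is the same mechanism that produced the identity \eqref{eq:5} earlier. The special case $\phi(t) = t^\alpha$ is then a direct substitution, with $\int_0^1 t^{2\alpha}\,dt = \frac{1}{1+2\alpha}$.

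The main obstacle is the first step: getting the correct drift and constants for $M$, and justifying the meander version of Pitman's representation. In particular, one must check that the coefficients match those in the statement, including the factor $\frac14$ on the boundary term. I will first try to derive this from the Ray--Knight/Pitman SDE by conditioning on the bottom local time, then verify the constants against the case $\phi(t) = t^\alpha$ by computing means. A secondary technical point is the boundary behaviour at $t = 0$: one needs $M(t)\phi(t) \to 0$ and integrability of $\phi/M$ near $t = 0$. I would handle this by localizing on $[\varepsilon, 1]$ and letting $\varepsilon \to 0$, as in the step $\ell_x I_x \to 0$ of the proof of Theorem \ref{thm:main}.
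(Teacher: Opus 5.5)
\textbf{There is a genuine gap: you have misidentified $M$, so your first step has no valid input.} In the paper's proof the meander is realized as $m(t)=\frac12\hat L_t(Y)=\frac12\ell_{\tau(t)}$. Here $\ell$ is the local-time profile of the reflected process $Y$ to which Proposition \ref{prop:RB} applies: a reflecting Brownian bridge, conditioned on $\ell_0=c$ and then mixed over $c$. The $M$ of the statement is this meander path, so $M(0)=0$ and $M(1)=m(1)=\frac12\ell_0(Y)$. Pitman's Theorem 4 gives \eqref{eq:Pitman} from $\ell_0=c$, Proposition \ref{prop:RB} is applied, and your substitution $t=I_x$ (with $\ell_{x(t)}=2m(t)$) translates the result.

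You have the roles reversed. You take $\ell_x$ to be the local times \emph{of the meander} and postulate a ``meander analogue of Theorem 4'' in which that profile solves \eqref{eq:Pitman} from a random $c>0$. This is false.
\begin{itemize}
\item The meander leaves $0$ like a $\mathrm{BES}(3)$ process, so its local time at level $0$ is zero. Your $M(1)$ would vanish identically.
\item Its profile is not governed by \eqref{eq:Pitman} at all. From $c=0$ that SDE forces $\mathbb{E}\ell_x\sim 4x$ near level $0$ (an excursion has two ends at $0$). The meander, with only one end there, has $\mathbb{E}\ell_x\sim 2x$.
\item The Ray--Knight ``mixture over the bottom local time'' describes $Y$, not $m$.
\end{itemize}
The missing idea is Pitman's Corollary 16: the meander \emph{is} half the transformed local-time profile of a reflecting bridge.

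\textbf{Your guessed SDE is also wrong, and not just by a normalization.} Under $M\mapsto\kappa M$ the coefficient of $M/t$ is unchanged. To match the $M\phi'$ term from the product rule, it must be $-1$ in the reversed time $u=1-t$, not $-\frac12$. With drift $\frac{1}{2M}-\frac{M}{2t}$, the terms $M\phi/t$ and $M\phi'$ come out with unequal weights in either time direction.

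Time-change \eqref{eq:Pitman} for $Y$ by $du=\ell_x\,dx$, $u=1-I_x$. Then $N_u:=\frac12\ell_{x(u)}=m(1-u)$ solves
\[
dN_u=\Big(\frac{1}{N_u}-\frac{N_u}{1-u}\Big)du+dW_u,\qquad N_0=m(1),\quad N_1=0.
\]
So the reversed meander given its endpoint is a $\mathrm{BES}(3)$ bridge to $0$ (the radial part of a three-dimensional Brownian bridge). You could also quote this directly, without any local times. The product rule for $N_u\phi(1-u)$ then gives
\[
\int_0^1\Big(\frac{\phi(t)}{m(t)}-m(t)\Big(\frac{\phi(t)}{t}+\phi'(t)\Big)\Big)dt+m(1)\phi(1)=-\int_0^1\phi(1-u)\,dW_u ,
\]
with $W$ independent of $m(1)$. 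This is your Step 2 done correctly.

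Repaired this way, your route is the paper's computation in the other order. You do It\^o in the occupation-time variable; the paper does It\^o in the level variable and then changes variables. Via the $\mathrm{BES}(3)$-bridge SDE, your route bypasses Pitman's results entirely.

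When you check constants, halving the identity above gives the boundary term $\frac{m(1)\phi(1)}{2}$, not $\frac{m(1)\phi(1)}{4}$. The displayed $\frac14$ is the $\frac{c\phi(1)}{4}$ of Proposition \ref{prop:RB} with $c=\ell_0(Y)=2m(1)$, so do not try to force it.
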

\begin{proof}
For $\{Y_t\}_{0 \le t \le 1}$ a process with local times $\{\ell_x\}_{x \ge 0}$,
define
\begin{equation*}
\hat{L}_t(Y):=\ell_{\tau(t)}, \quad \mbox{with } \tau(t):= \sup\left\{y \ge 0: \int_y^\infty \ell_x dx > t\right\}.
\end{equation*}
By \cite[Corollary 16]{Pit99}, if $Y$ is a reflected Brownian motion on $[0,1]$,
then $\hat{L}(Y)/2$ is a Brownian meander. 
It suffices to apply Proposition \ref{prop:RB} to conclude.
\end{proof}

\bigskip
{\bf Acknowlegdment}: 
We thank Misha Shkolnikov for helpful discussions years back.
This research is supported by NSF CAREER Award DMS-2538791, the Tang
Family Assistant Professorship
and a Columbia-CityU/HK collaborative project that is supported by InnoHK Initiative, The Government of the HKSAR and the AIFT Lab.

\bibliography{unique}
\bibliographystyle{abbrv}

\end{document}